\pgfplotsset{compat=1.16}
\newtheorem{theorem}{Theorem}[section]
\newtheorem{corollary}[theorem]{Corollary}
\newtheorem{lemma}[theorem]{Lemma}
\newtheorem{remark}[theorem]{Remark}
\newtheorem{assumption}[theorem]{Assumption}
\def\section{\@startsection {section}{1}{\z@}{3.25ex plus 1ex minus
		.2ex}{1.5ex plus .2ex}{\large\bf}}
\def\subsection{\@startsection{subsection}{2}{\z@}{3.25ex plus 1ex minus
		.2ex}{1.5ex plus .2ex}{\normalsize\bf}}
\title{Solution formula for the general birth-death chemical diffusion master equation}
\author{Alberto Lanconelli\thanks{Dipartimento di Scienze Statistiche Paolo Fortunati, Università di Bologna, Bologna, Italy. \textbf{e-mail}: alberto.lanconelli2@unibo.it} \and Berk Tan Perçin\thanks{Dipartimento di Scienze Statistiche Paolo Fortunati, Università di Bologna, Bologna, Italy. \textbf{e-mail}: berktan.percin2@unibo.it} \and Mauricio J. del Razo\thanks{Department of Mathematics and Computer Science, Freie Universität Berlin, Arnimallee 6, 14195 Berlin, Germany and Dutch Institute for Emergent Phenomena, 1090GL Amsterdam, The Netherlands \textbf{e-mail}: m.delrazo@fu-berlin.de}}
\date{}
\begin{document}
	
\maketitle

\bigskip

\begin{abstract}
	We propose a solution formula for chemical diffusion master equations of birth and death type. These equations, proposed and formalized in the recent paper \cite{delRazo}, aim at incorporating the spatial diffusion of molecules into the description provided by the classical chemical master equation. We start from the general approach developed in \cite{Lanconelli} and perform a more detailed analysis of the representation found there. This leads to a solution formula for birth-death chemical diffusion master equations which is expressed in terms of the solution to the reaction-diffusion partial differential equation associated with the system under investigation. Such representation also reveals a striking analogy with the solution to the classical birth-death chemical master equations. The solutions of our findings are also illustrated for several examples. 
\end{abstract}

Key words and phrases: chemical diffusion master equation, Ornstein-Uhlenbeck process, Feynman-Kac formula, spectral methods. \\

AMS 2000 classification: 60H07; 60H30; 92E20.

\allowdisplaybreaks

\section{Introduction and statement of the main results}

The dynamics of biochemical processes in living cells are commonly understood as an interplay between the spatial transport (diffusion) of molecules and their chemical kinetics (reaction), both of which are inherently stochastic at the molecular scale. In the case of systems with small molecule numbers in spatially well-mixed settings, the diffusion is averaged out and the probabilistic dynamics are governed by the well-known chemical master equation (CME) \cite{gillespie1977exact,qian2010chemical, qian2021stochastic}. The CME can be seldom solved analytically \cite{jahnke2007solving}. However, solving a few simple cases analytically can bring valuable insight to the solutions of more complex cases. Alternatively, one can solve it by integrating stochastic trajectories with the Gillespie or tau-leap algorithms \cite{anderson2015stochastic, gillespie1977exact}, by approximation methods \cite{deuflhard2008adaptive,engblom2009spectral,munsky2006finite,schnoerr2017approximation} or even by deep learning approaches \cite{gupta2021deepcme, jiang2021neural}.

In the case of spatially inhomogeneous systems, where diffusion is not averaged out, one would expect to obtain a similar master equation. However, obtaining such an equation is plagued with mathematical difficulties, and although it was hinted in previous work \cite{doi1976second} and formulated for some specific systems \cite{schweitzer2003brownian}, it was not until recently that this was formalized into the so-called chemical diffusion master equation (CDME) \cite{delRazo,delRazo2}. The CDME changes a few paradigms that have not yet been explored thoroughly in stochastic chemical kinetics models. It combines continuous and discrete degrees of freedom, and it models reaction and diffusion as a joint stochastic process. It consists of an infinite sorted family of Fokker-Planck equations, where each level of the sorted family corresponds to a certain number of particles/molecules. The equations at each level describe the spatial diffusion of the corresponding set of particles, and they are coupled to each other via reaction operators, which change the number of particles in the system. The CDME is the theoretical backbone of reaction-diffusion processes, and thus, it is fundamental to model and understand biochemical processes in living cells, as well as to develop multiscale numerical methods \cite{del2018grand,flegg2012two,kostre2021coupling,smith2018spatially} and hybrid algorithms \cite{chen2014brownian,dibak2018msm,del2021multiscale}. The stochastic trajectories of the CDME can be often integrated using particle--based reaction--diffusion simulations \cite{ andrews2017smoldyn, hoffmann2019readdy}. However, analytic and approximate solutions have not yet been explored in detail. In this work, we work out a method to obtain an analytic solution of the CDME for a simple birth-death reaction system, with the aim to bring insight of the CDME solution of more complex systems. 

We consider a system of indistinguishable molecules of a chemical species $S$ which undergo 
\begin{itemize}
	\item \emph{diffusion} in the bounded open region $\mathbb{X}$ of $\mathbb{R}^3$;
	\item \emph{degradation} and \emph{creation} chemical reactions
	\begin{equation*}
	\mbox{(I)}\quad S\xrightarrow{\lambda_d(x)}\varnothing\quad\quad\quad\mbox{(II)}\quad \varnothing\xrightarrow{\lambda_c(x)}S,
	\end{equation*}
	where $\lambda_d(x)$ denotes the propensity for reaction (I) to occur for a particle located at position $x\in\mathbb{X}$ (i.e., the probability per unit of time for this particle to disappear) while $\lambda_c(x)$ is the propensity for a new particle to be created at position $x\in\mathbb{X}$ by reaction (II).
\end{itemize} 
To describe the evolution in time of such system the authors in \cite{delRazo,delRazo2} proposed a set of equations for the number and position of the molecules. Namely, for $t\geq 0$, $n\geq 1$ and $A\in\mathcal{B}(\mathbb{X}^n)$ they set
\begin{align*}
\mathcal{N}(t)&:=\mbox{ number of molecules at time $t$},\\
\rho_0(t)&:=\mathbb{P}(\mathcal{N}(t)=0)\\
\int_{A}\rho_n(t,x_1,...,x_n)dx_1\cdot\cdot\cdot dx_n&:=\mathbb{P}\left(\{\mathcal{N}(t)=n\}\cap\{(X_1(t),...,X_n(t))\in A\}\right);
\end{align*}
here, $dx_i$ stands for the three dimensional integration volume $dx^{(1)}_idx^{(2)}_idx^{(3)}_i$. Then, according to \cite{delRazo,delRazo2} the time evolution of the reaction-diffusion process described above is governed by the following infinite system of equations: 
\begin{align}\label{equation}
\begin{cases}
\begin{split}
\partial_t\rho_n(t,x_1,...,x_n)=&\sum_{i=1}^n\Delta_i\rho_n(t,x_1,...,x_n)\\
&+(n+1)\int_{\mathbb{X}}\lambda_d(y)\rho_{n+1}(t,x_1,...,x_n,y)dy\\
&-\sum_{i=1}^n\lambda_d(x_i)\rho_n(t,x_1,...,x_n)\\
&+\frac{1}{n}\sum_{i=1}^n\lambda_c(x_i)\rho_{n-1}(t,x_1,...,x_{i-1},x_{i+1},...,x_n)\\
&-\int_{\mathbb{X}}\lambda_c(y)dy\cdot\rho_n(t,x_1,...,x_n),\quad\quad\quad n\geq 0, t>0, (x_1,...,x_n)\in \mathbb{X}^n;
\end{split}
\end{cases}
\end{align}
where we agree on assigning value zero to the three sums above when $n=0$. The term
\begin{align*}
\sum_{i=1}^n\Delta_i\rho_n(t,x_1,...,x_n)
\end{align*}
in \eqref{equation} refers to spatial diffusion of the particles: here, 
\begin{align*}
    \Delta_i:=\partial^2_{x_i^{(1)}}+\partial^2_{x_i^{(2)}}+\partial^2_{x_i^{(3)}}
\end{align*}
stands for the three dimensional Laplace operator. We remark that to ease the notation we choose a driftless isotropic diffusion but the extension to the divergence-form second order partial differential operator
\begin{align*}
\mathtt{L}_{x_i}v:=\sum_{l,m=1}^3\partial_{x^{(l)}_i}\left(a_{lm}(x_i)\partial_{x^{(m)}_i}v\right)-\sum_{l=1}^3\partial_{x^{(l)}_i}\left(b_{l}(x_i)v\right),
\end{align*}
which models a general anisotropic diffusion with drift on $\mathbb{R}^3$, is readily obtained. The terms
\begin{align*}
(n+1)\int_{\mathbb{X}}\lambda_d(y)\rho_{n+1}(t,x_1,...,x_n,y)dy-\sum_{i=1}^n\lambda_d(x_i)\rho_n(t,x_1,...,x_n) 
\end{align*}
formalize gain and loss, respectively, due to reaction (I), while
\begin{align*}
\frac{1}{n}\sum_{i=1}^n\lambda_c(x_i)\rho_{n-1}(t,x_1,...,x_{i-1},x_{i+1},...,x_n)-\int_{\mathbb{X}}\lambda_c(y)dy\cdot\rho_n(t,x_1,...,x_n)
\end{align*}
relate to reaction (II). System \eqref{equation} is combined with initial and Neumann boundary conditions 
\begin{align}\label{initial}
\begin{cases}
\begin{split}
\rho_0(0)&=1;\\
\rho_n(0,x_1,...,x_n)&=0,\quad n\geq 1, (x_1,...,x_n)\in\mathbb{X}^n;\\
\partial_{\nu}\rho_n(t,x_1,...,x_n)&=0,\quad n\geq 1, t\geq 0, (x_1,...,x_n)\in\partial \mathbb{X}^n.
\end{split}
\end{cases}
\end{align}
The initial condition above states that there are no molecules in the system at time zero while the Neumann condition prevents flux through the boundary of $\mathbb{X}$, thus forcing the diffusion of the molecules inside $\mathbb{X}$. The symbol $\partial_{\nu}$ in \eqref{initial} stands for the directional derivative along the outer normal vector at the boundary of $\mathbb{X}^n$.

Aim of this note is to present the following solution formula for \eqref{equation}-\eqref{initial}. 

\begin{theorem}\label{final}
	Let $v$ be a classical solution of the problem
	\begin{align}\label{acf}
	\begin{cases}
	\partial_tv(t,x)=\Delta v(t,x)-\lambda_d(x)v(t,x)+\lambda_c(x), & t> 0, x\in \mathbb{X};\\
	v(0,x)=0,& x\in \bar{\mathbb{X}};\\
	\partial_{\nu}v(t,x)=0,& t\geq 0, x\in\partial\mathbb{X}.
	\end{cases}
	\end{align}
	Then, the  chemical diffusion master equation \eqref{equation} with initial and boundary conditions \eqref{initial} has a classical solution given by 
	\begin{align}\label{1*}
	\rho_0(t)=\mathbb{P}(\mathcal{N}(t)=0)=\exp\left\{-\int_{\mathbb{X}}v(t,x)dx\right\},\quad t\geq 0,	
	\end{align}
	and for $n\geq 1$
	\begin{align}\label{2*}
	\rho_n(t,x_1,...,x_n)=\exp\left\{-\int_{\mathbb{X}}v(t,x)dx\right\}\frac{1}{n!}v(t,x_1)\cdot\cdot\cdot v(t,x_n), \quad t\geq 0,(x_1,...,x_n)\in \mathbb{X}^n.
	\end{align}
\end{theorem}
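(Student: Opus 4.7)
My plan is a direct substitution argument: I will plug the proposed ansatz \eqref{1*}--\eqref{2*} into the CDME \eqref{equation} and verify that it works, term by term. The key observation, which makes the whole verification transparent, is that the ansatz has a Poisson point-process structure with intensity $v(t,\cdot)$, so that every differentiation of $\rho_n$ produces the factor $\rho_n$ itself multiplied by a logarithmic derivative of a single copy of $v$.

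Concretely, setting $V(t):=\int_{\mathbb{X}}v(t,x)\,dx$, I would first differentiate $V$ in time, use the PDE \eqref{acf} for $v$, and integrate by parts using the Neumann boundary condition to obtain the key identity
\begin{equation*}
V'(t)\;=\;\int_{\mathbb{X}}\lambda_c(y)\,dy\;-\;\int_{\mathbb{X}}\lambda_d(y)v(t,y)\,dy.
\end{equation*}
Next I would compute $\partial_t\rho_n$ via the product rule: one piece comes from $\partial_t e^{-V(t)}=-V'(t)e^{-V(t)}$, and the remaining pieces from the $n$ factors $v(t,x_i)$, each contributing $\partial_tv(t,x_i)/v(t,x_i)$ times $\rho_n$. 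Substituting the PDE \eqref{acf} at every $x_i$ splits this second piece into a Laplacian, a degradation, and a creation contribution.

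Now I match with the right-hand side of \eqref{equation}. The diffusion term $\sum_i\Delta_i\rho_n$ reproduces the Laplacian piece exactly, because $\Delta_i$ only hits the factor $v(t,x_i)$. The loss term $-\sum_i\lambda_d(x_i)\rho_n$ reproduces the degradation piece directly. For the remaining reaction integrals, I exploit the Poisson-like algebraic identities
\begin{equation*}
(n+1)\rho_{n+1}(t,x_1,\dots,x_n,y)=\rho_n(t,x_1,\dots,x_n)v(t,y),\qquad
\frac{\rho_{n-1}(t,x_1,\dots,\widehat{x_i},\dots,x_n)}{\rho_n(t,x_1,\dots,x_n)}=\frac{n}{v(t,x_i)},
\end{equation*}
which turn the gain term of reaction (I) into $\rho_n\int\lambda_d v\,dy$ and the gain term of reaction (II) into $\rho_n\sum_i\lambda_c(x_i)/v(t,x_i)$. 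The loss term of (II) is already in the right form. Collecting everything and invoking the identity for $V'(t)$ makes all terms cancel. The $n=0$ case must be handled separately, but it is even simpler: the three sums vanish, only the gain $\int\lambda_d\rho_1\,dy$ and the loss $-\int\lambda_c\,dy\cdot\rho_0$ remain, and these reproduce $V'(t)e^{-V(t)}$ via the same identity.

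Finally, the initial condition $v(0,\cdot)\equiv 0$ gives $V(0)=0$, hence $\rho_0(0)=1$ and $\rho_n(0,\cdot)=0$ for $n\ge1$, and the Neumann condition for $\rho_n$ follows from the one for $v$ since $\partial_\nu$ on the smooth part of $\partial\mathbb{X}^n$ acts on a single coordinate at a time. I do not anticipate a deep obstacle: the argument is essentially bookkeeping. The only point requiring some care is the systematic use of the Neumann boundary condition when differentiating $V(t)$ (to discard $\int\Delta v\,dx$) and the separate treatment of $n=0$, since the general formula relies on identities that are vacuous for $n=0$.
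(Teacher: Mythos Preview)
Your proposal is correct. The direct substitution argument you outline is exactly the verification the authors themselves flag right after the theorem statement: ``one can trivially differentiate the right hand sides with respect to $t$ and verify using \eqref{acf} that they indeed solve \eqref{equation}-\eqref{initial}.'' Your bookkeeping is accurate, including the crucial step $V'(t)=\int_{\mathbb{X}}\lambda_c-\int_{\mathbb{X}}\lambda_d v$ obtained by integrating \eqref{acf} and discarding $\int_{\mathbb{X}}\Delta v$ via the Neumann condition, and the Poisson-type algebraic identities linking $\rho_{n-1},\rho_n,\rho_{n+1}$.

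However, this is \emph{not} the proof the paper actually writes out. The paper gives a \emph{constructive derivation} in Section~\ref{proof}: it invokes a prior representation theorem (Theorem~\ref{main theorem}) expressing $\Pi_N^{\otimes n}\rho_n$ through Gaussian expectations of derivatives of a function $u_N$ solving an auxiliary $N$-dimensional PDE; it then solves that PDE explicitly via a Feynman--Kac formula driven by mean-reverting Ornstein--Uhlenbeck processes, computes the Gaussian expectations in closed form, passes to the limit $N\to\infty$, and finally recognizes the resulting spectral series $\sum_{k\ge1}c_k\frac{1-e^{-\alpha_kt}}{\alpha_k}\xi_k(x)$ as the solution $v$ of \eqref{acf}. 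Your approach is shorter and entirely elementary, but it requires the ansatz \eqref{1*}--\eqref{2*} as input; the paper's approach is longer and relies on stochastic analysis and spectral theory, but it \emph{discovers} the formula rather than merely checking it, and it illustrates a method the authors intend to reuse for higher-order reactions.
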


To prove the validity of the representations \eqref{1*}-\eqref{2*} one can trivially differentiate the right hand sides with respect to $t$ and verify using \eqref{acf} that they indeed solve \eqref{equation}-\eqref{initial}. We will however provide in the next section a constructive derivation of the expressions \eqref{1*}-\eqref{2*} which is based on the general approach proposed in \cite{Lanconelli}; here, an infinite dimensional version of the moment generating function method, which is commonly utilized to solve analytically some chemical master equations (see for details \cite{McQuarrie}), is developed. These techniques are also employed in an ongoing work which consider chemical diffusion master equations with higher order reactions. 

\begin{remark}
It is important to highlight the striking similarities between the representation formulas \eqref{1*}-\eqref{2*} for the solution of the CDME \eqref{equation}-\eqref{initial} and the solution 
\begin{align}\label{solution CME}
	\varphi_n(t)=\frac{\left(\frac{\mathtt{c}}{\mathtt{d}}(1-e^{-\mathtt{d}t})\right)^n}{n!}e^{-\frac{\mathtt{c}}{\mathtt{d}}(1-e^{-\mathtt{d}t})},\quad t\geq 0,n\geq 0,
	\end{align}
of the corresponding (diffusion-free) birth-death chemical master equation
\begin{align}\label{CME}
\dot{\varphi}_n(t)=\mathtt{d}(n+1)\varphi_{n+1}(t)+\mathtt{c}\varphi_{n-1}(t)-\mathtt{d}n\varphi_{n}(t)-\mathtt{c}\varphi_{n}(t),
\end{align}
with initial condition
\begin{align}\label{CME initial}
\varphi_n(0)=\delta_{0n},\quad\mbox{for all }n\geq 0.
\end{align}
Equation \eqref{CME}-\eqref{CME initial} describes the evolution in time of the probability
\begin{align*}
\varphi_n(t):=\mathbb{P}(\emph{number of molecules at time $t$}=n)
\end{align*}
for the reactions
\begin{align*}
	\emph{(I)}\quad S\xrightarrow{\mathtt{d}}\varnothing\quad\quad\quad\emph{(II)}\quad \varnothing\xrightarrow{\mathtt{c}}S,
	\end{align*}
	with no molecules at time zero. Here, $\mathtt{d}$ and $\mathtt{c}$ are the \emph{stochastic rate constants} for degradation and creation reactions, respectively. (To see how \eqref{solution CME} is derived from \eqref{CME}-\eqref{CME initial} one can for instance use the moment generating function method: see \cite{McQuarrie} for details).
 We note that the function
\begin{align*}
	t\mapsto \frac{\mathtt{c}}{\mathtt{d}}(1-e^{-\mathtt{d}t}),
	\end{align*}
appearing in \eqref{solution CME} solves the deterministic rate equation
\begin{align}\label{rate equation}
	\begin{cases}
	\frac{d}{dt}v(t)=-\mathtt{d} v(t)+\mathtt{c},& t>0\\
	v(0)=0&.
	\end{cases}
	\end{align}
This establishes a perfect agreement between \eqref{acf},\eqref{1*},\eqref{2*}, i.e. representation of the solution for \eqref{equation}-\eqref{initial} and reaction-diffusion PDE, on one side and \eqref{solution CME},\eqref{rate equation}, i.e. representation of the solution for \eqref{CME}-\eqref{CME initial} and rate equation, on the other side. 
\end{remark}

\begin{corollary}\label{corollary}
	In the reaction-diffusion model described by the CDME \eqref{equation}-\eqref{initial}, conditioned on the event $\{\mathcal{N}(t)=n\}$ the positions of the molecules at time $t$ are independent and identically distributed with probability density function
	\begin{align*}
	p(t,x):=\frac{v(t,x)}{\int_{\mathbb{X}}v(t,x)dx},\quad x\in \mathbb{X}.
	\end{align*}
	Moreover,
	\begin{align*}
	    \mathbb{P}(\mathcal{N}(t)=n)=\frac{\left(\int_{\mathbb{X}}v(t,x)dx\right)^n}{n!}\exp\left\{-\int_{\mathbb{X}}v(t,x)dx\right\}.
	\end{align*}
\end{corollary}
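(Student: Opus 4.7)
The plan is to derive both assertions as direct consequences of the explicit representation \eqref{2*} from Theorem \ref{final}. The key observation is that \eqref{2*} has a factorized structure in the spatial variables $x_1,\dots,x_n$, with the prefactor $\exp\{-\int_{\mathbb{X}} v(t,x)\,dx\}/n!$ depending only on $t$. This product form is exactly what one would expect from a mixed Poisson/i.i.d.\ structure, so we only need to make this intuition precise by marginalization.

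First, I would compute the marginal law $\mathbb{P}(\mathcal{N}(t)=n)$ by integrating $\rho_n$ over $\mathbb{X}^n$. Using \eqref{2*} and Fubini's theorem,
\begin{align*}
\mathbb{P}(\mathcal{N}(t)=n)
&=\int_{\mathbb{X}^n}\rho_n(t,x_1,\dots,x_n)\,dx_1\cdots dx_n\\
&=\frac{1}{n!}\exp\!\left\{-\int_{\mathbb{X}} v(t,x)\,dx\right\}\prod_{i=1}^n\int_{\mathbb{X}}v(t,x_i)\,dx_i,
\end{align*}
which yields the claimed Poisson formula with intensity $\int_{\mathbb{X}} v(t,x)\,dx$. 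The case $n=0$ matches \eqref{1*} directly. To justify the use of Fubini here I would note that $v$ is the classical solution of \eqref{acf} on a bounded domain and is nonnegative (for instance by the Feynman-Kac representation alluded to in the introduction, or by a standard maximum principle argument given $\lambda_c\geq 0$), so the integrand is nonnegative and locally bounded.

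Second, to obtain the conditional law of the positions given $\{\mathcal{N}(t)=n\}$, I would use the definition of the joint density $\rho_n$ together with the Poisson formula just derived. Dividing \eqref{2*} by $\mathbb{P}(\mathcal{N}(t)=n)$ on the event $\{\mathcal{N}(t)=n\}$ cancels both the exponential prefactor and the factorial, leaving
\begin{align*}
\frac{\rho_n(t,x_1,\dots,x_n)}{\mathbb{P}(\mathcal{N}(t)=n)}
=\prod_{i=1}^n\frac{v(t,x_i)}{\int_{\mathbb{X}} v(t,y)\,dy}
=\prod_{i=1}^n p(t,x_i),
\end{align*}
which is the product form characterizing i.i.d.\ random variables with common density $p(t,\cdot)$.

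There is essentially no obstacle here: the corollary is a pure restatement of the multiplicative structure of \eqref{2*}. The only genuinely substantive points to mention are that $\int_{\mathbb{X}} v(t,x)\,dx$ is finite (which follows from boundedness of $\mathbb{X}$ and regularity of $v$) and strictly positive for $t>0$ (so that the conditional density $p(t,x)$ is well-defined), and that $p(t,\cdot)$ is indeed a probability density, which is immediate from its definition.
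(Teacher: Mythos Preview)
Your proposal is correct and follows essentially the same approach as the paper: both arguments simply integrate the factorized formula \eqref{2*} over $\mathbb{X}^n$ to obtain the Poisson marginal and then divide to extract the product conditional density. The only cosmetic difference is that the paper proves the two assertions in the opposite order and phrases the conditional computation at the level of events $A\in\mathcal{B}(\mathbb{X}^n)$ rather than densities.
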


\begin{proof}
	Let $A\in\mathcal{B}(\mathbb{X}^n)$; then,
	\begin{align*}
	\mathbb{P}((X_1(t),...,X_n(t))\in A|\mathcal{N}(t)=n)&=\frac{\mathbb{P}(\{(X_1(t),...,X_n(t))\in A\}\cap\{\mathcal{N}(t)=n\})}{\mathbb{P}(\mathcal{N}(t)=n)}\\
	&=\frac{\int_{A}\rho_n(t,x_1,...,x_n)dx_1\cdot\cdot\cdot dx_n}{\int_{\mathbb{X}^n}\rho_n(t,x_1,...,x_n)dx_1\cdot\cdot\cdot dx_n}\\
	&=\frac{\int_{A}\exp\left\{-\int_{\mathbb{X}}v(t,x)dx\right\}\frac{1}{n!}v(t,x_1)\cdot\cdot\cdot v(t,x_n)dx_1\cdot\cdot\cdot dx_n}{\int_{\mathbb{X}^n}\exp\left\{-\int_{\mathbb{X}}v(t,x)dx\right\}\frac{1}{n!}v(t,x_1)\cdot\cdot\cdot v(t,x_n)dx_1\cdot\cdot\cdot dx_n}\\
	&=\int_A\frac{v(t,x_1)}{\int_{\mathbb{X}}v(t,x)dx}\cdot\cdot\cdot\frac{v(t,x_n)}{\int_{\mathbb{X}}v(t,x)dx}dx_1\cdot\cdot\cdot dx_n.
	\end{align*}
	The second part of the statement is proved as follows:
	\begin{align*}
	   \mathbb{P}(\mathcal{N}(t)=n)&=\int_{\mathbb{X}^n}\rho_n(t,x_1,...,x_n)dx_1\cdot\cdot\cdot dx_n\\
	   &=\int_{\mathbb{X}^n}\exp\left\{-\int_{\mathbb{X}}v(t,x)dx\right\}\frac{1}{n!}v(t,x_1)\cdot\cdot\cdot v(t,x_n)dx_1\cdot\cdot\cdot dx_n\\
	   &=\frac{\left(\int_{\mathbb{X}}v(t,x)dx\right)^n}{n!}\exp\left\{-\int_{\mathbb{X}}v(t,x)dx\right\}.
	\end{align*}
\end{proof}

The paper is organized as follows: in Section 2 we propose a constructive proof of Theorem \ref{final} which is based on the approach described in \cite{Lanconelli} while in Section 3 we show graphical illustrations of our findings for some particular cases of physical interest that allow for explicit computations in the reaction diffusion PDE \eqref{acf}. 

\section{Constructive proof of Theorem \ref{final}}\label{proof}

In this section we propose a constructive method to derive the representation formulas \eqref{1*}-\eqref{2*} of Theorem \ref{final}. The method we propose steams from a further development of the ideas and results presented in \cite{Lanconelli} which are reported here for easiness of reference. \\
For notational purposes we assume $\mathbb{X}=]0,1[$. Consider the birth-death CDME
\begin{align}\label{equation proof}
\begin{cases}
\begin{split}
\partial_t\rho_n(t,x_1,...,x_n)=&\sum_{i=1}^n\partial^2_{x_i}\rho_n(t,x_1,...,x_n)\\
&+(n+1)\int_0^1\lambda_d(y)\rho_{n+1}(t,x_1,...,x_n,y)dy\\
&-\sum_{i=1}^n\lambda_d(x_i)\rho_n(t,x_1,...,x_n)\\
&+\frac{1}{n}\sum_{i=1}^n\lambda_c(x_i)\rho_{n-1}(t,x_1,...,x_{i-1},x_{i+1},...,x_n)\\
&-\int_0^1\lambda_c(y)dy\cdot\rho_n(t,x_1,...,x_n),\quad\quad\quad n\geq 0, t>0, (x_1,...,x_n)\in ]0,1[^n,
\end{split}
\end{cases}
\end{align}
with the usual agreement of assigning value zero to the three sums above when $n=0$, together with initial and Neumann boundary conditions 
\begin{align}\label{initial proof}
\begin{cases}
\begin{split}
\rho_0(0)&=1;\\
\rho_n(0,x_1,...,x_n)&=0,\quad n\geq 1, (x_1,...,x_n)\in[0,1]^n;\\
\partial_{\nu}\rho_n(t,x_1,...,x_n)&=0,\quad n\geq 1, t\geq 0, (x_1,...,x_n)\in\partial [0,1]^n.
\end{split}
\end{cases}
\end{align}
We set
\begin{align}\label{A}
\mathcal{A}:=-\partial^2_x+\lambda_d(x),\quad x\in [0,1],
\end{align}
with homogenous Neumann boundary conditions and write $\{\xi_k\}_{k\geq 1}$ for the  orthonormal basis of $L^2([0,1])$ that diagonalizes the operator $\mathcal{A}$; this means that for all $j,k\geq 1$ we have
\begin{align*}
\int_0^1\xi_k(y)\xi_j(y)dy=\delta_{kj},\quad\xi_k'(0)=\xi_k'(1)=0,
\end{align*}
and there exists a sequence of non negative real numbers $\{\alpha_k\}_{k\geq 1}$ such that
\begin{align*}
\mathcal{A}\xi_k=\alpha_k\xi_k,\quad\mbox{ for all $k\geq 1$}.
\end{align*}
We observe that $\mathcal{A}$ is an unbounded, non negative self-adjoint operator. 
\begin{assumption}\label{assumption 1}
The sequence of eigenvalues $\{\alpha_k\}_{k\geq 1}$ is strictly positive.
\end{assumption} 
We now denote by $\Pi_N:L^2([0,1])\to L^2([0,1])$ the orthogonal projection onto the finite dimensional space spanned by $\{\xi_1,...,\xi_N\}$, i.e. 
\begin{align*}
\Pi_Nf(x):=\sum_{k=1}^N\langle f,\xi_k\rangle_{L^2([0,1])}\xi_k(x),\quad x\in [0,1];
\end{align*}
we also set
\begin{align}\label{notation lambda}
d_k:=\langle \lambda_d,\xi_k\rangle_{L^2([0,1])},\quad c_k:=\langle \lambda_c,\xi_k\rangle_{L^2([0,1])},\quad \gamma:=\int_0^1\lambda_c(y)dy.
\end{align} 

\begin{assumption}\label{assumption 2}
There exists $N_0\geq 1$ such that $\Pi_{N_0}\lambda_d=\lambda_d$; this is equivalent to say $\Pi_{N}\lambda_d=\lambda_d$ for all $N\geq N_0$.
\end{assumption}

In the sequel we set $\Pi_N^{\otimes n}$ to be the orthogonal projection from $L^2([0,1]^n)$ to the linear space generated by the functions $\{\xi_{i_1}\otimes\cdot\cdot\cdot\otimes\xi_{i_n}, 1\leq i_1,...,i_n\leq N\}$. The next theorem was proved in \cite{Lanconelli}.

\begin{theorem}\label{main theorem}
Let Assumptions \ref{assumption 1}-\ref{assumption 2} be in force and denote by $\{\rho_n\}_{n\geq 0}$ a classical solution of equation \eqref{equation}-\eqref{initial}. Then, for any $N\geq N_0$ and $t\geq 0$ we have the representation
\begin{align}\label{formula n=0}
	\rho_0^{(N)}(t)=\mathbb{E}[u_N(t,Z)],
\end{align}	
and for any $n\geq 1$ and $(x_1,...,x_n)\in [0,1]^n$,
\begin{align}\label{formula}
\Pi_N^{\otimes n}\rho_n(t,x_1,...,x_n)=\frac{1}{n!}\sum_{j_1,...j_n=1}^N\mathbb{E}\left[\left(\partial_{z_{j_1}}\cdot\cdot\cdot \partial_{z_{j_n}}u_N\right)(t,Z)\right]\xi_{j_1}(x_1)\cdot\cdot\cdot\xi_{j_n}(x_n).
\end{align}
Here, 
\begin{align}\label{formula-expectation}
\mathbb{E}\left[\left(\partial_{z_{j_1}}\cdot\cdot\cdot \partial_{z_{j_n}}u_N\right)(t,Z)\right]=\int_{\mathbb{R}^N}\left(\partial_{z_{j_1}}\cdot\cdot\cdot \partial_{z_{j_n}}u_N\right)(t,z)(2\pi)^{-N/2}e^{-\frac{|z|^2}{2}}dz,
\end{align}	
while $u_N:[0,+\infty[\times\mathbb{R}^N\to\mathbb{R}$ is a classical solution of the partial differential equation
\begin{align}\label{PDE}
\begin{cases}
\begin{split}
&\partial_tu_N(t,z)=\sum_{k=1}^N\alpha_k\partial^2_{z_k}u_N(t,z)+\sum_{k=1}^N\left(d_k-c_k-\alpha_k z_k\right)\partial_{z_k}u_N(t,z)+\left(\sum_{k=1}^Nc_kz_k-\gamma\right)u_N(t,z)\\
&u_N(0,z)=1,\quad t\geq 0, z\in\mathbb{R}^N.
\end{split}
\end{cases}
\end{align}
\end{theorem}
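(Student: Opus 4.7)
The strategy is to derive the infinite linear ODE system satisfied by the Fourier coefficients $\hat\rho_n^{(N)}(t;j_1,\ldots,j_n):=\langle\rho_n(t,\cdot),\xi_{j_1}\otimes\cdots\otimes\xi_{j_n}\rangle_{L^2([0,1]^n)}$, check that the ansatz $\tilde\rho_n^{(N)}(t;j_1,\ldots,j_n):=\frac{1}{n!}\mathbb{E}\bigl[\partial_{z_{j_1}}\cdots\partial_{z_{j_n}} u_N(t,Z)\bigr]$ satisfies the same system with the same initial data, and conclude by uniqueness.

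A critical preliminary observation is that although the Laplacian $\partial_x^2$ is not diagonal in $\{\xi_k\}_{k\geq 1}$, the operator $\mathcal{A}=-\partial_x^2+\lambda_d$ is; integrating by parts using the Neumann condition and the eigenvalue identity $\mathcal{A}\xi_k=\alpha_k\xi_k$, one obtains
\[
\langle\partial_{x_i}^2\rho_n-\lambda_d(x_i)\rho_n,\xi_{j_1}\otimes\cdots\otimes\xi_{j_n}\rangle=-\alpha_{j_i}\hat\rho_n^{(N)}(j).
\]
The triple-product integrals $\int_0^1\xi_k\xi_j\xi_\ell\,dy$ that would otherwise arise from the multiplication operator $\lambda_d(x_i)$ cancel exactly between the diffusion and the degradation-loss. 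Coupled with Assumption \ref{assumption 2} (which keeps $\lambda_d$ inside the span) and the straightforward projections of the creation and degradation-gain terms, this yields
\[
\partial_t\hat\rho_n^{(N)}(j)=-\sum_{\ell=1}^n\alpha_{j_\ell}\hat\rho_n^{(N)}(j)+(n+1)\sum_k d_k\,\hat\rho_{n+1}^{(N)}(j,k)+\frac{1}{n}\sum_{\ell=1}^n c_{j_\ell}\,\hat\rho_{n-1}^{(N)}(\widehat{j_\ell})-\gamma\,\hat\rho_n^{(N)}(j),
\]
where $\widehat{j_\ell}$ denotes omission of the $\ell$-th index.

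For the ansatz side, I would differentiate $\tilde\rho_n^{(N)}(j)$ in $t$, substitute the PDE \eqref{PDE}, and rely on two Gaussian identities: the integration by parts $\mathbb{E}[z_k f(Z)]=\mathbb{E}[\partial_{z_k}f(Z)]$ and the product rule $\partial_{z_{j_1}}\cdots\partial_{z_{j_n}}(z_k u_N)=z_k\,\partial_{z_{j_1}}\cdots\partial_{z_{j_n}} u_N+\sum_\ell\delta_{k,j_\ell}\,\partial_{z_{j_1}}\cdots\partial_{z_{j_{\ell-1}}}\partial_{z_{j_{\ell+1}}}\cdots\partial_{z_{j_n}} u_N$. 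The Ornstein--Uhlenbeck pair $\alpha_k(\partial_{z_k}^2u_N-z_k\partial_{z_k}u_N)$ collapses to $-\sum_\ell\alpha_{j_\ell}\tilde\rho_n^{(N)}$ after one IBP on the drift piece; the drift $d_k\partial_{z_k}u_N$ raises the derivative order by one and, via the combinatorial factor $(n+1)!/n!=n+1$, produces the degradation-gain $(n+1)\sum_k d_k\tilde\rho_{n+1}^{(N)}(j,k)$; the contribution $-c_k\partial_{z_k}u_N$ from the drift cancels one piece of the multiplicative term $c_k z_k u_N$ after IBP, while the complementary piece, carried by the $\delta_{k,j_\ell}$ in the product rule together with the factor $(n-1)!/n!=1/n$, delivers the creation-gain $\frac{1}{n}\sum_\ell c_{j_\ell}\tilde\rho_{n-1}^{(N)}(\widehat{j_\ell})$; and the constant $-\gamma$ matches directly. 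The initial condition $u_N(0,z)=1$ forces $\tilde\rho_n^{(N)}(0)=\delta_{n,0}$, matching \eqref{initial proof}.

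The main obstacle is the combinatorial bookkeeping that connects the indistinguishability-induced prefactors $\frac{1}{n}\sum_i$ and $(n+1)\int_0^1 dy$ in the CDME with the single-sum derivative operators in \eqref{PDE}; the entire matching is driven by the canonical commutator $[\partial_{z_k},z_\ell]=\delta_{k\ell}$ iterated through all $n$ indices. A related subtlety is the cancellation between the Laplacian and degradation-loss terms, which is the reason why \eqref{PDE} involves only the eigenvalues $\alpha_k$ and the scalar coefficients $d_k,c_k,\gamma$, rather than the triple-product integrals of the basis. Once the term-by-term identification is complete, uniqueness for the infinite linear system---established by a standard Picard argument in a suitable weighted sequence space, using Assumption \ref{assumption 1}---delivers $\Pi_N^{\otimes n}\rho_n=\tilde\rho_n^{(N)}$ for all $n$ and $t$, which is the claimed representation.
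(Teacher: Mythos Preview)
The paper does not actually contain a proof of this theorem; it is quoted verbatim from the earlier work \cite{Lanconelli} (``The next theorem was proved in \cite{Lanconelli}''), so there is no in-paper argument to compare against.

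That said, your proposal is a coherent and essentially correct reconstruction of what such a proof must look like. The two key mechanisms you identify are exactly right: (i) grouping the diffusion and degradation-loss terms into $-\mathcal{A}_i$ so that projection onto the eigenbasis produces only the scalar $\alpha_{j_i}$ (and Assumption~\ref{assumption 2} then guarantees that the degradation-gain integral closes within the first $N$ modes), and (ii) matching the resulting hierarchy against derivatives of $u_N$ via the Gaussian integration-by-parts identity $\mathbb{E}[Z_k f(Z)]=\mathbb{E}[\partial_{z_k}f(Z)]$ together with the commutator $[\partial_{z_k},z_\ell]=\delta_{k\ell}$. Your term-by-term bookkeeping (OU pair $\to -\sum_\ell\alpha_{j_\ell}$, drift $d_k\partial_{z_k}\to (n+1)\sum_k d_k$, the cancellation between $-c_k\partial_{z_k}$ and the IBP of $c_k z_k$, the residual $\delta_{k,j_\ell}$ producing the creation gain) checks out.

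One point deserves more care than you give it: the uniqueness step. The projected hierarchy is an \emph{infinite} system in $n$ with unbounded coupling (the factor $n+1$ in the degradation-gain), so a ``standard Picard argument'' needs a norm in which that coupling is tame---typically a weighted $\ell^1$ norm of the form $\sum_n r^n\sup_j|\hat\rho_n(j)|$ with $r$ chosen relative to $\sum_k|d_k|$. Assumption~\ref{assumption 1} alone does not address this; it controls the diagonal decay, not the off-diagonal growth in $n$. If you intend this as a full proof rather than a sketch, you should either supply that estimate or, more directly, exploit the explicit exponential form of $u_N$ obtained in the paper's Lemma following the theorem to verify the representation by substitution, sidestepping uniqueness altogether.
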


We now start working out the details of formulas \eqref{formula n=0}-\eqref{formula}.

\begin{lemma}
The solution to the Cauchy problem \eqref{PDE} can be represented as	
\begin{align}\label{b}
u_N(t,z)=\exp\left\{-\gamma t+\sum_{k=1}^N\left(c_kz_kg_k(t)+c_k(d_k-c_k)\int_0^tg_k(s)ds+c_k^2\alpha_k\int_0^tg_k(s)^2ds\right)\right\},
\end{align}
where
\begin{align}\label{c}
g_k(t):=\frac{1-e^{-\alpha_kt}}{\alpha_k},\quad t\geq 0,k=1,...,N.
\end{align}
\end{lemma}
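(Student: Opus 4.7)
My plan is to verify the formula (\ref{b}) by means of an exponential-linear ansatz dictated by the structure of the Cauchy problem (\ref{PDE}). Since every coefficient appearing in (\ref{PDE}) is at most linear in the variable $z$, the natural guess is
\[u_N(t,z)=\exp\Bigl(A(t)+\sum_{k=1}^N B_k(t)\,z_k\Bigr),\]
with $A(0)=0$ and $B_k(0)=0$ so as to match the initial condition $u_N(0,\cdot)\equiv 1$. Substituting this ansatz into (\ref{PDE}) and dividing through by $u_N$ produces an identity that is affine in the $z_k$'s, which I will split by matching the coefficients of $1$ and of each $z_k$ separately.

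First, I would extract the coefficients of $z_k$: they yield the decoupled linear ODEs
\[B_k'(t)=c_k-\alpha_k B_k(t),\qquad B_k(0)=0,\]
whose unique solutions, via the integrating factor $e^{\alpha_k t}$, are precisely $B_k(t)=c_k\,g_k(t)$ with $g_k$ as in (\ref{c}). The $z$-free terms then collapse to
\[A'(t)=\sum_{k=1}^N\alpha_k B_k(t)^2+\sum_{k=1}^N(d_k-c_k)B_k(t)-\gamma,\qquad A(0)=0,\]
and inserting $B_k(t)=c_k g_k(t)$ and integrating from $0$ to $t$ reproduces exactly the exponent appearing in (\ref{b}). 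All of this is routine and uses only elementary ODE manipulations.

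I do not expect any substantial computational obstacle; the only genuinely delicate point is that the ansatz above captures \emph{the} classical solution of (\ref{PDE}) rather than some other one. This follows from uniqueness of classical solutions to the linear parabolic Cauchy problem with smooth coefficients of at most linear growth: the spatial operator in (\ref{PDE}) is the generator of an Ornstein-Uhlenbeck process on $\mathbb{R}^N$ perturbed by a linear potential, and standard Feynman-Kac arguments guarantee uniqueness within reasonable growth classes. Alternatively, and perhaps more in the spirit of the constructive derivation advertised in this section, one could bypass the ansatz altogether and write
\[u_N(t,z)=e^{-\gamma t}\,\mathbb{E}_z\Bigl[\exp\Bigl(\sum_{k=1}^N c_k\int_0^t Z_k(s)\,ds\Bigr)\Bigr]\]
for the Ornstein-Uhlenbeck diffusion $Z$ with generator $\sum_k\alpha_k\partial^2_{z_k}+\sum_k(d_k-c_k-\alpha_k z_k)\partial_{z_k}$, solve the componentwise SDE in closed form, and evaluate the resulting Gaussian expectation using a stochastic Fubini theorem; this second route recovers (\ref{b}) constructively and simultaneously certifies uniqueness.
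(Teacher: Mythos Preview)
Your proposal is correct, but your primary argument is genuinely different from the paper's. The paper proceeds exactly along the lines of your closing ``alternative'' paragraph: it invokes the Feynman--Kac representation for (\ref{PDE}), writes each coordinate $\mathcal{Z}_k^{z_k}$ as the explicit solution of a mean-reverting Ornstein--Uhlenbeck SDE, applies a stochastic Fubini theorem to compute $\int_0^t \mathcal{Z}_k^{z_k}(s)\,ds$ as an affine function of a Wiener integral, and then evaluates the resulting Gaussian moment generating function in closed form. Your main route, by contrast, is purely analytic: the exponential-linear ansatz reduces (\ref{PDE}) to the decoupled scalar ODEs $B_k'=c_k-\alpha_k B_k$ and a quadrature for $A$, with no probability at all. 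Your method is shorter and more elementary, and it makes transparent why the exponent is linear in $z$; the paper's Feynman--Kac route is lengthier but is self-certifying for existence and uniqueness, and it stays within the probabilistic framework that the surrounding section (and the companion paper \cite{Lanconelli}) is built on. The one point you flag as delicate---uniqueness for the ansatz---is precisely what the paper's approach sidesteps by \emph{defining} $u_N$ through the expectation (\ref{FK}).
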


\begin{proof}
The solution to the Cauchy problem \eqref{PDE} admits the following Feynman-Kac representation (see for instance \cite{KS})
\begin{align}\label{FK}
u_N(t,z)=\mathbb{E}\left[\exp\left\{\int_0^t\left(\sum_{k=1}^Nc_k\mathcal{Z}^{z_k}_k(s)-\gamma\right)ds\right\}\right],\quad t\geq 0, z=(z_1,...,z_N)\in\mathbb{R}^N.
\end{align} 
Here, for $k\in\{1,...,N\}$, the stochastic process $\{\mathcal{Z}_k^{z_k}(t)\}_{t\geq 0}$ is the unique strong solution of the mean-reverting Ornstein-Uhlenbeck stochastic differential equation
\begin{align}\label{OU}
d\mathcal{Z}_k^{z_k}(t)=\left(d_k-c_k-\alpha_k \mathcal{Z}^{z_k}_k(t)\right)dt+\sqrt{2\alpha_k}dW_k(t),\quad \mathcal{Z}_k^{z_k}(0)=z_k, 	
\end{align}
with $\{W_1(t)\}_{t\geq 0}$,...,$\{W_N(t)\}_{t\geq 0}$ being independent one dimensional Brownian motions. Using the independence of the processes $\mathcal{Z}_1^{z_1}$,...., $\mathcal{Z}_N^{z_N}$ we can rewrite \eqref{FK} as
\begin{align}\label{a}
u_N(t,z)&=e^{-\gamma t}\mathbb{E}\left[\exp\left\{\sum_{k=1}^Nc_k\int_0^t\mathcal{Z}^{z_k}_k(s)ds\right\}\right]=e^{-\gamma t}\mathbb{E}\left[\prod_{k=1}^N\exp\left\{c_k\int_0^t\mathcal{Z}^{z_k}_k(s)ds\right\}\right]\nonumber\\
&=e^{-\gamma t}\prod_{k=1}^N\mathbb{E}\left[\exp\left\{c_k\int_0^t\mathcal{Z}^{z_k}_k(s)ds\right\}\right].
\end{align}
We now want to compute the last expectation explicitly: first of all, we observe that equation \eqref{OU} admits the unique strong solution 
\begin{align*}
\mathcal{Z}^{z_k}_k(t)&=z_ke^{-\alpha_k t}+\frac{d_k-c_k}{\alpha_k}\left(1-e^{-\alpha_k t}\right)+\int_0^te^{-\alpha_k(t-s)}\sqrt{2\alpha_k}dW_k(s),
\end{align*}
(recall Assumption \ref{assumption 1}). Therefore,
\begin{align*}
\int_0^t\mathcal{Z}^{z_k}_k(s)ds&=z_k\frac{1-e^{-\alpha_kt}}{\alpha_k}+(d_k-c_k)\int_0^t\frac{1-e^{-\alpha_k s}}{\alpha_k}ds+\int_0^t\int_0^se^{-\alpha_k(s-u)}\sqrt{2\alpha_k}dW_k(u)ds\\
&=z_k\frac{1-e^{-\alpha_kt}}{\alpha_k}+(d_k-c_k)\int_0^t\frac{1-e^{-\alpha_k s}}{\alpha_k}ds+\sqrt{2\alpha_k}\int_0^t\frac{1-e^{-\alpha_k(t-s)}}{\alpha_k}dW_k(s);
\end{align*}
in the last equality we employed Fubini theorem for Lebesgue-Wiener integrals. The identity above yields
\begin{align*}
\mathbb{E}\left[\exp\left\{c_k\int_0^t\mathcal{Z}^{z_k}_k(s)ds\right\}\right]=&\exp\left\{c_k\left(z_k\frac{1-e^{-\alpha_kt}}{\alpha_k}+(d_k-c_k)\int_0^t\frac{1-e^{-\alpha_k s}}{\alpha_k}ds\right)\right\}\\
&\times\mathbb{E}\left[\exp\left\{c_k\sqrt{2\alpha_k}\int_0^t\frac{1-e^{-\alpha_k(t-s)}}{\alpha_k}dW_k(s)\right\}\right]\\
=&\exp\left\{c_k\left(z_k\frac{1-e^{-\alpha_kt}}{\alpha_k}+(d_k-c_k)\int_0^t\frac{1-e^{-\alpha_k s}}{\alpha_k}ds\right)\right\}\\
&\times\exp\left\{c_k^2\alpha_k\int_0^t\left(\frac{1-e^{-\alpha_k(t-s)}}{\alpha_k}\right)^2ds\right\},
\end{align*}
where in last equality we used the fact that $\int_0^t\frac{1-e^{-\alpha_k(t-s)}}{\alpha_k}dW_k(s)$ is a Gaussian random variable with mean zero and variance $\int_0^t\left(\frac{1-e^{-\alpha_k(t-s)}}{\alpha_k}\right)^2ds$.
This, together with \eqref{a}, gives
\begin{align*}
u_N(t,z)=&e^{-\gamma t}\prod_{k=1}^N\exp\left\{c_k\left(z_k\frac{1-e^{-\alpha_kt}}{\alpha_k}+(d_k-c_k)\int_0^t\frac{1-e^{-\alpha_k s}}{\alpha_k}ds\right)\right\}\nonumber\\
&\times\prod_{k=1}^N\exp\left\{c_k^2\alpha_k\int_0^t\left(\frac{1-e^{-\alpha_k(t-s)}}{\alpha_k}\right)^2ds\right\}\nonumber\\
=&\exp\left\{-\gamma t+\sum_{k=1}^N\left(c_kz_kg_k(t)+c_k(d_k-c_k)\int_0^tg_k(s)ds+c_k^2\alpha_k\int_0^tg_k(s)^2ds\right)\right\},
\end{align*}
(recall definition \eqref{c}). The proof is complete.
\end{proof}

\begin{lemma}
Expectation \eqref{formula n=0} can be written as
\begin{align*}
\rho_0^{(N)}(t)=&\exp\left\{t\left(\sum_{k=1}^N\frac{c_kd_k}{\alpha_k}-\gamma\right)+\sum_{k=1}^Nc_kd_k\frac{e^{-\alpha_k t}-1}{\alpha_k^2}\right\}.
\end{align*}
In particular,
\begin{align}\label{1}
\rho_0(t)=\lim_{N\to +\infty}\rho_0^{(N)}(t)=\exp\left\{\sum_{k\geq 1}c_kd_k\frac{e^{-\alpha_k t}-1}{\alpha_k^2}\right\}.
\end{align}
\end{lemma}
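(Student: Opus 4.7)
The plan is to evaluate the Gaussian expectation in \eqref{formula n=0} directly from the explicit formula \eqref{b}, use an elementary ODE satisfied by $g_k$ to collapse the exponent, and then handle the limit $N\to\infty$ via a simple spectral identification.

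First I would substitute \eqref{b} into \eqref{formula n=0}, pull out all factors that do not depend on $z$, and observe that the $z$-dependence of $u_N(t,z)$ is concentrated in the linear form $\sum_{k=1}^N c_k g_k(t)z_k$. Since $Z$ is a standard Gaussian in $\mathbb{R}^N$, the remaining expectation factorises as a product of moment generating functions of one-dimensional standard Gaussians, each contributing $\exp\{c_k^2 g_k(t)^2/2\}$. This reduces $\log\rho_0^{(N)}(t)$ to
\begin{align*}
\log\rho_0^{(N)}(t)=-\gamma t+\sum_{k=1}^N\Bigl[c_k(d_k-c_k)\!\int_0^t\! g_k(s)\,ds+c_k^2\alpha_k\!\int_0^t\! g_k(s)^2\,ds+\tfrac{1}{2}c_k^2 g_k(t)^2\Bigr].
\end{align*}

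The heart of the argument is to notice that, by definition \eqref{c}, $g_k$ solves the first order linear ODE $g_k'(s)+\alpha_k g_k(s)=1$ with $g_k(0)=0$. Multiplying this equation by $g_k(s)$ and integrating over $[0,t]$ yields the identity
\begin{align*}
\tfrac{1}{2}g_k(t)^2=\int_0^t g_k(s)\,ds-\alpha_k\int_0^t g_k(s)^2\,ds.
\end{align*}
Substituting this into the bracket above causes the two $c_k^2$-terms to annihilate and leaves simply $c_k d_k\int_0^t g_k(s)\,ds$. Computing this elementary integral as $\frac{t}{\alpha_k}-\frac{1-e^{-\alpha_k t}}{\alpha_k^2}$ yields exactly the stated closed-form expression for $\rho_0^{(N)}(t)$.

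For the limit \eqref{1}, Assumption \ref{assumption 2} forces $d_k=0$ for $k>N_0$, so both sums in the exponent stabilise for $N\geq N_0$ and the only remaining question is why the linear-in-$t$ coefficient $\sum_{k\geq 1}\frac{c_kd_k}{\alpha_k}-\gamma$ vanishes. The clean way to see this is spectral: by Parseval applied to the orthonormal eigenbasis $\{\xi_k\}$ of $\mathcal{A}$, one has $\sum_{k\geq 1}\frac{c_kd_k}{\alpha_k}=\langle\lambda_c,\mathcal{A}^{-1}\lambda_d\rangle_{L^2([0,1])}$, which is well-defined thanks to Assumption \ref{assumption 1}. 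Since the constant function $1$ satisfies the homogeneous Neumann boundary conditions and $\mathcal{A}(1)=-\partial_x^2(1)+\lambda_d\cdot 1=\lambda_d$, we have $\mathcal{A}^{-1}\lambda_d=1$ and the inner product reduces to $\int_0^1\lambda_c(y)\,dy=\gamma$. This cancels the linear-in-$t$ term and yields \eqref{1}.

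The only real subtlety, beyond routine Gaussian bookkeeping, is spotting the ODE-based quadratic identity that eliminates the three $c_k^2$ contributions; after that, the closed form is a direct computation, with the spectral observation $\mathcal{A}^{-1}\lambda_d=1$ providing the link back to $\gamma$ needed for the limit.
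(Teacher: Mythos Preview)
Your proposal is correct and follows essentially the same route as the paper: evaluate the Gaussian expectation of the explicit $u_N$, cancel the $c_k^2$-terms via the quadratic identity for $g_k$, integrate $g_k$ explicitly, and then use $\mathcal{A}\mathtt{1}=\lambda_d$ together with self-adjointness to identify $\sum_k c_kd_k/\alpha_k$ with $\gamma$. The only cosmetic differences are that you derive the quadratic identity from the ODE $g_k'+\alpha_k g_k=1$ (the paper simply states it and appeals to direct verification) and that you invoke Assumption~\ref{assumption 2} to make the passage to the limit trivially finite, which the paper leaves implicit.
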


\begin{proof}
Let $Z=(Z_1,...,Z_N)$ be an $N$-dimensional vector of i.i.d. standard Gaussian random variables; then,
\begin{align*}
\rho_0^{(N)}(t)=&\mathbb{E}[u_N(t,Z)]\\
=&\mathbb{E}\left[\exp\left\{-\gamma t+\sum_{k=1}^N\left(c_kZ_ig_k(t)+c_k(d_k-c_k)\int_0^tg_k(s)ds+c_k^2\alpha_k\int_0^tg_k(s)^2ds\right)\right\}\right]\\
=&\exp\left\{-\gamma t+\sum_{k=1}^N\left(c_k(d_k-c_k)\int_0^tg_k(s)ds+c_k^2\alpha_k\int_0^tg_k(s)^2ds\right)\right\}\mathbb{E}\left[\exp\left\{\sum_{k=1}^Nc_kZ_ig_k(t)\right\}\right]\\
=&\exp\left\{-\gamma t+\sum_{k=1}^N\left(\frac{c_k^2g_k(t)^2}{2}+c_k(d_k-c_k)\int_0^tg_k(s)ds+c_k^2\alpha_k\int_0^tg_k(s)^2ds\right)\right\}\\
=&\exp\left\{-\gamma t+\sum_{k=1}^N\left[c_k^2\left(\frac{g_k(t)^2}{2}-\int_0^tg_k(s)ds+\alpha_k\int_0^tg_k(s)^2ds\right)+c_kd_k\int_0^tg_k(s)ds\right]\right\}\\
=&\exp\left\{-\gamma t+\sum_{k=1}^Nc_kd_k\int_0^tg_k(s)ds\right\}.
\end{align*}
The fourth equality follows from the expression of the exponential generating function of a Gaussian vector while the last equality is due to identity
\begin{align*}
\frac{g_k(t)^2}{2}-\int_0^tg_k(s)ds+\alpha_k\int_0^tg_k(s)^2ds=0,\quad t\geq 0
\end{align*}
which follows from a direct verification (recall definition \eqref{c}). On the other hand, we have
\begin{align*}
	\int_0^tg_k(s)ds=\frac{t}{\alpha_k}+\frac{e^{-\alpha_k t}-1}{\alpha_k^2},
\end{align*}
and hence
\begin{align*}
	\rho_0^{(N)}(t)=&\exp\left\{t\left(\sum_{k=1}^N\frac{c_kd_k}{\alpha_k}-\gamma\right)+\sum_{k=1}^Nc_kd_k\frac{e^{-\alpha_k t}-1}{\alpha_k^2}\right\}.
\end{align*}
Moreover, letting $N$ to infinity we get
\begin{align*}
\rho_0(t)=&\lim_{N\to +\infty}\rho_0^{(N)}(t) \\
=&\lim_{N\to +\infty}\exp\left\{t\left(\sum_{k=1}^N\frac{c_kd_k}{\alpha_k}-\gamma\right)+\sum_{k=1}^Nc_kd_k\frac{e^{-\alpha_k t}-1}{\alpha_k^2}\right\}\\
=&\exp\left\{t\left(\sum_{k\geq 1}\frac{c_kd_k}{\alpha_k}-\gamma\right)+\sum_{k\geq 1}c_kd_k\frac{e^{-\alpha_k t}-1}{\alpha_k^2}\right\}\\
=&\exp\left\{\sum_{k\geq 1}c_kd_k\frac{e^{-\alpha_k t}-1}{\alpha_k^2}\right\}.
\end{align*}
Here, we employed the identity
\begin{align*}
\sum_{k\geq 1}\frac{c_kd_k}{\alpha_k}=\gamma,
\end{align*}
which follows from
\begin{align*}
\sum_{k\geq 1}\frac{c_kd_k}{\alpha_k}=&\langle\mathcal{A}^{-1}\lambda_c,\lambda_d\rangle_{L^2([0,1])}=\langle\mathcal{A}^{-1}\lambda_c,\mathcal{A}\mathtt{1}\rangle_{L^2([0,1])}=\langle\lambda_c,\mathcal{A}^{-1}\mathcal{A}\mathtt{1}\rangle_{L^2([0,1])}\\
=&\langle\lambda_c,\mathtt{1}\rangle_{L^2([0,1])}=\int_0^1\lambda_c(x)\mathtt{1}(x)dx=\gamma.
\end{align*}
We also denoted $\mathtt{1}(x)=1$, $x\in [0,1]$ and exploited the identity $\mathcal{A}\mathtt{1}=\lambda_d$. 
\end{proof}

\begin{lemma}
Expectation \eqref{formula} can be written as
	\begin{align*}
	\Pi_N^{\otimes n}\rho_n(t,x_1,...,x_n)=\rho_0^{(N)}(t)\frac{1}{n!}\left(\sum_{j=1}^Nc_{j}g_{j}(t)\xi_{j}(x_1)\right)\cdot\cdot\cdot\left(\sum_{j=1}^Nc_{j}g_{j}(t)\xi_{j}(x_n)\right).
	\end{align*}
In particular,
\begin{align}\label{2}
\rho_n(t,x_1,...,x_n)&=\lim_{N\to+\infty}\Pi_N^{\otimes n}\rho_n(t,x_1,...,x_n)\nonumber\\
&=\exp\left\{-\sum_{k\geq 1}c_kd_k\frac{1-e^{-\alpha_k t}}{\alpha_k^2}\right\}\frac{1}{n!}\left(\sum_{j\geq 1}c_{j}\frac{1-e^{-\alpha_j t}}{\alpha_j}\xi_{j}\right)^{\otimes n}(x_1,...,x_n).
\end{align}
\end{lemma}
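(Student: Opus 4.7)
The plan is to exploit the explicit form of $u_N$ from the previous lemma, which has an exponent that is \emph{affine} in $z$. Writing $u_N(t,z)=\exp\{F(t)+\sum_{k=1}^N c_k z_k g_k(t)\}$, where $F(t)$ collects the $z$-independent terms, the key observation is that each partial derivative $\partial_{z_{j}}$ simply pulls down the multiplicative constant $c_{j}g_{j}(t)$. By iteration,
\begin{align*}
\bigl(\partial_{z_{j_1}}\cdots \partial_{z_{j_n}} u_N\bigr)(t,z)=\Bigl(\prod_{l=1}^n c_{j_l}g_{j_l}(t)\Bigr)\,u_N(t,z).
\end{align*}

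Next I would take the expectation against the standard Gaussian $Z$. Since the prefactor $\prod_l c_{j_l}g_{j_l}(t)$ does not depend on $z$, it comes outside and the remaining expectation is exactly $\mathbb{E}[u_N(t,Z)]=\rho_0^{(N)}(t)$, as computed in the previous lemma. Substituting into the representation \eqref{formula} and recognising that the resulting $n$-fold sum over $(j_1,\ldots,j_n)$ factorises as a product of $n$ identical one-dimensional sums yields
\begin{align*}
\Pi_N^{\otimes n}\rho_n(t,x_1,\ldots,x_n)=\rho_0^{(N)}(t)\,\frac{1}{n!}\prod_{l=1}^n\Bigl(\sum_{j=1}^N c_j g_j(t)\,\xi_j(x_l)\Bigr),
\end{align*}
which is the first assertion.

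For the limit formula, I would let $N\to\infty$. The scalar factor $\rho_0^{(N)}(t)$ converges to $\exp\{-\sum_{k\geq 1}c_k d_k(1-e^{-\alpha_k t})/\alpha_k^2\}$ by the previous lemma (and the identity $\sum_{k\geq 1} c_kd_k/\alpha_k=\gamma$ used therein). Each one-dimensional sum $\sum_{j=1}^N c_j g_j(t)\xi_j(x_l)$ is the $N$-th partial sum of the Fourier expansion in the basis $\{\xi_j\}$ of the function $x\mapsto\sum_{j\geq 1} c_j(1-e^{-\alpha_j t})/\alpha_j\,\xi_j(x)$, which is nothing but the $L^2$-expansion of the classical solution $v(t,\cdot)$ of \eqref{acf} (this identification is where the connection to the reaction-diffusion PDE becomes transparent). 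Taking the product over $l=1,\ldots,n$ and using $g_j(t)=(1-e^{-\alpha_j t})/\alpha_j$ delivers the representation \eqref{2}.

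The only genuinely delicate point is the passage $\Pi_N^{\otimes n}\rho_n\to \rho_n$: in principle this is only $L^2([0,1]^n)$-convergence of orthogonal projections, but the factorised product form on the right-hand side, together with Assumption \ref{assumption 2} (which ensures that $\lambda_d$ is a finite linear combination of the $\xi_j$'s) and the exponential decay in $k$ coming from $g_k(t)=(1-e^{-\alpha_k t})/\alpha_k$, should yield absolute and uniform convergence of the series, upgrading the limit to a pointwise (indeed classical) one. Apart from this regularity check, the whole derivation reduces to a direct computation driven by the affine structure of the exponent of $u_N$.
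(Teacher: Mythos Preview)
Your proposal is correct and follows essentially the same route as the paper: both exploit the affine-in-$z$ exponent of $u_N$ so that each $\partial_{z_j}$ extracts the factor $c_j g_j(t)$, identify the remaining expectation with $\rho_0^{(N)}(t)$, factorise the $n$-fold sum, and then pass to the limit $N\to\infty$ using the previous lemma and the definition of $g_j$. Your additional remarks on the mode of convergence and the identification with $v(t,\cdot)$ go slightly beyond what the paper spells out at this stage, but the core argument is the same.
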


\begin{proof}
We note that according to \eqref{b} we have
\begin{align*}
\left(\partial_{z_{j_1}}\cdot\cdot\cdot \partial_{z_{j_n}}u_N\right)(t,z)=u_N(t,z) c_{j_1}g_{j_1}(t)\cdot\cdot\cdot c_{j_n}g_{j_n}(t),
\end{align*}
and hence
\begin{align*}
\mathbb{E}\left[\left(\partial_{z_{j_1}}\cdot\cdot\cdot \partial_{z_{j_n}}u_N\right)(t,Z)\right]=\rho_0^{(N)}(t)c_{j_1}g_{j_1}(t)\cdot\cdot\cdot c_{j_n}g_{j_n}(t).
\end{align*}
Therefore,
\begin{align*}
\Pi_N^{\otimes n}\rho_n(t,x_1,...,x_n)&=\frac{1}{n!}\sum_{j_1,...j_n=1}^N\mathbb{E}\left[\left(\partial_{z_{j_1}}\cdot\cdot\cdot \partial_{z_{j_n}}u_N\right)(t,Z)\right]\xi_{j_1}(x_1)\cdot\cdot\cdot\xi_{j_n}(x_n)\\
&=\frac{1}{n!}\sum_{j_1,...j_n=1}^N\rho_0^{(N)}(t) c_{j_1}g_{j_1}(t)\cdot\cdot\cdot c_{j_n}g_{j_n}(t)\xi_{j_1}(x_1)\cdot\cdot\cdot\xi_{j_n}(x_n)\\
&=\rho_0^{(N)}(t)\frac{1}{n!}\left(\sum_{j=1}^Nc_{j}g_{j}(t)\xi_{j}(x_1)\right)\cdot\cdot\cdot\left(\sum_{j=1}^Nc_{j}g_{j}(t)\xi_{j}(x_n)\right).
\end{align*}
Moreover, letting $N$ to infinity we obtain 
\begin{align*}
\rho_n(t,x_1,...,x_n)&=\exp\left\{-\sum_{k\geq 1}c_kd_k\frac{1-e^{-\alpha_k t}}{\alpha_k^2}\right\}\frac{1}{n!}\left(\sum_{j\geq 1}c_{j}g_{j}(t)\xi_{j}(x_1)\right)\cdot\cdot\cdot\left(\sum_{j\geq 1}c_{j}g_{j}(t)\xi_{j}(x_n)\right)\\
&=\exp\left\{-\sum_{k\geq 1}c_kd_k\frac{1-e^{-\alpha_k t}}{\alpha_k^2}\right\}\frac{1}{n!}\left(\sum_{j\geq 1}c_{j}\frac{1-e^{-\alpha_j t}}{\alpha_j}\xi_{j}\right)^{\otimes n}(x_1,...,x_n).
\end{align*}
\end{proof}

We are now in a position to show the equivalence between \eqref{1}-\eqref{2} and \eqref{1*}-\eqref{2*}. \\
	We start observing that
	\begin{align*}
	d_k:=\langle\lambda_d,\xi_k\rangle=\langle\mathcal{A}1,\xi_k\rangle=\langle 1,\mathcal{A}\xi_k\rangle=\alpha_k\langle 1,\xi_k\rangle=\alpha_k\int_0^1\xi_k(x)dx.
	\end{align*}
	Therefore, from formula \eqref{1} we can write 
	\begin{align*}
	\rho_0(t)=\mathbb{P}(\mathcal{N}(t)=0)&=\exp\left\{-\sum_{k\geq 1}c_kd_k\frac{1-e^{-\alpha_k t}}{\alpha_k^2}\right\}\\
	&=\exp\left\{-\sum_{k\geq 1}c_k\int_0^1\xi_k(x)dx\frac{1-e^{-\alpha_k t}}{\alpha_k}\right\}\\
	&=\exp\left\{-\int_0^1\left(\sum_{k\geq 1}c_k\frac{1-e^{-\alpha_k t}}{\alpha_k}\xi_k(x)\right)dx\right\}\\
	&=\exp\left\{-\int_0^1v(t,x)dx\right\},
	\end{align*}
	where we set
	\begin{align}\label{z}
	v(t,x):=\sum_{k\geq 1}c_k\frac{1-e^{-\alpha_k t}}{\alpha_k}\xi_k(x).
	\end{align}
	Note that with this notation we can also write according to \eqref{2} that 
	\begin{align*}
	\rho_n(t,x_1,...,x_n)=\exp\left\{-\int_{\mathbb{X}}v(t,x)dx\right\}\frac{1}{n!}v(t,x_1)\cdot\cdot\cdot v(t,x_n), \quad t\geq 0,(x_1,...,x_n)\in \mathbb{X}^n.
	\end{align*}
	If we now prove that the function $v$ defined in \eqref{z} solves \eqref{acf}, then the equivalence between \eqref{1}-\eqref{2} and \eqref{1*}-\eqref{2*} will be established. Since
	\begin{align*}
	\partial_tv(t,x)&=\partial_t\left(\sum_{k\geq 1}c_k\frac{1-e^{-\alpha_k t}}{\alpha_k}\xi_k(x)\right)=\sum_{k\geq 1}c_ke^{-\alpha_k t}\xi_k(x),
	\end{align*}
	we can conclude that 
	\begin{align*}
	\partial_x^2v(t,x)-\lambda_d(x)v(t,x)&=-\mathcal{A}v(t,x)=-\mathcal{A}\left(\sum_{k\geq 1}c_k\frac{1-e^{-\alpha_k t}}{\alpha_k}\xi_k(x)\right)=\sum_{k\geq 1}c_k(e^{-\alpha_k t}-1)\xi_k(x)\\
	&=\sum_{k\geq 1}c_ke^{-\alpha_k t}\xi_k(x)-\lambda_c(x)=\partial_tv(t,x)-\lambda_c(x),
	\end{align*}
	proving the desired property (the initial and boundary conditions in \eqref{acf} are readily satisfied).

\section{Case study: one dimensional motion with constant degradation function}

In this section we illustrate through several plots our theoretical findings for some concrete models. According to formulas \eqref{1*}-\eqref{2*} the solution to the chemical diffusion master equation \eqref{equation}-\eqref{initial} is completely determined by the solution of equation \eqref{acf}. To solve this problem explicitly we decided to focus on the one dimensional case $\mathbb{X}=]0,1[$ with driftless isotropic diffusion (i.e. the framework of Section \ref{proof}) and constant degradation function $\lambda_d$. This last restriction yields the advantage of knowing the explicit form of the eigenfunctions and eigenvalues of the operator $\mathcal{A}$ in \eqref{A} and hence the possibility of working with \eqref{1}-\eqref{2}, which we recall to be equivalent to \eqref{1*}-\eqref{2*}.\\
When $\lambda_d(x)=\lambda_d$, $x\in [0,1]$ for some positive constant $\lambda_d$, we get
\begin{align*}
	\mathcal{A}f(x)=-f''(x)+\lambda_d f(x),\quad\quad \xi_k(x)=\cos((k-1)\pi x),\quad k\geq 1,
\end{align*} 	
and 
\begin{align}\label{alpha}
	\alpha_k=(k-1)^2\pi^2+\lambda_d,\quad k\geq 1.
\end{align}
Therefore, the degradation function $\lambda_d(x)=\lambda_d$ is proportional to the first eigenfunction $\xi_1(x)=\mathtt{1}(x)$ and hence orthogonal to all the other eigenfunctions $\xi_k(x)$ for $k\geq 2$; this gives
\begin{align*}
d_1=\langle \lambda_d,\xi_1\rangle=\lambda_d\quad\mbox{ and }\quad d_k=\langle \lambda_d,\xi_k\rangle=0\mbox{ for all }k\geq 2;
\end{align*}
note also that \eqref{alpha} implies $\alpha_1=\lambda_d$. Combining these facts in \eqref{1} and \eqref{2} we obtain
\begin{align}\label{d1}
	\rho_0(t)=\mathbb{P}(\mathcal{N}(t)=0)=\exp\left\{-c_1\frac{1-e^{-\lambda_d t}}{\lambda_d}\right\},\quad t\geq 0,	
\end{align}
and, for $n\geq 1$, $t\geq 0$ and $(x_1,...,x_n)\in [0,1]^n$,
\begin{align}\label{d}
	\rho_n(t,x_1,...,x_n)=\exp\left\{-c_1\frac{1-e^{-\lambda_d t}}{\lambda_d}\right\}\frac{1}{n!}\left(\sum_{j\geq 1}c_{j}\frac{1-e^{-\alpha_j t}}{\alpha_j}\xi_{j}\right)^{\otimes n}(x_1,...,x_n).
\end{align}
We now specify some interesting choices of the creation function $\lambda_c$.

\subsection{Constant creation function}
In the case $\lambda_c(x)=\lambda_c$, $x\in [0,1]$ for some positive constant $\lambda_c$, in other words the creation is uniform in the whole interval just like the degradation, we get from \eqref{d1} and \eqref{d}
\begin{align*}
	\rho_0(t)=\mathbb{P}(\mathcal{N}(t)=0)=\exp\left\{-\lambda_c\frac{1-e^{-\lambda_d t}}{\lambda_d}\right\},\quad t\geq 0,	
\end{align*}

\begin{figure}[h!]
	\centering
	\begin{subfigure}[c]{0.42\textwidth}
		\includegraphics[width=\linewidth, height=5cm]{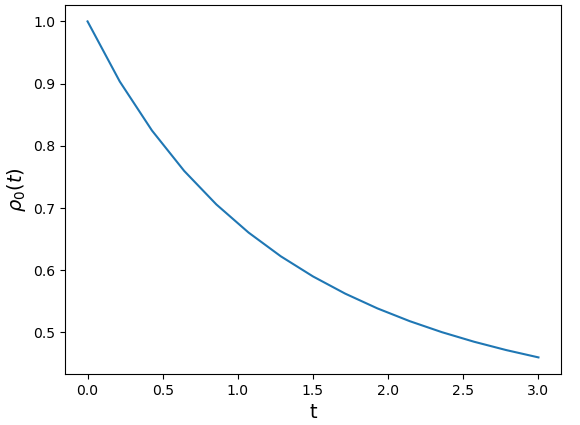}
		\caption{}
		\label{uniform creation 0 particles figure}
	\end{subfigure}
	\hspace{0.2cm}
	\begin{subfigure}[c]{0.47\textwidth}
		\includegraphics[width=\linewidth, height=5cm]{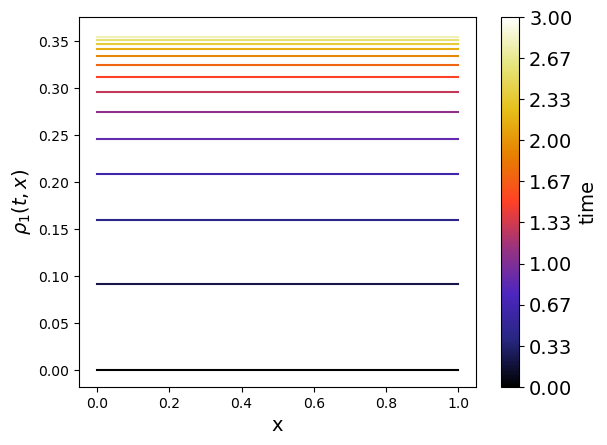}
		\caption{}
		\label{constant creation multiline figure}
	\end{subfigure}
	\caption{Solutions plots of the bdCDME generated with constant creation and degradation rates with $\lambda_c=\lambda_d=0.5$. a. The solution of $\rho_0(t)$ as a function of time. b. The solution of $\rho_1(t,x_1)$ as a function of position and time.}
 \label{fig:constant_creation}
\end{figure}

and for $n\geq 1$
\begin{align*}
	\rho_n(t,x_1,...,x_n)&=\exp\left\{-\lambda_c\frac{1-e^{-\lambda_d t}}{\lambda_d}\right\}\frac{1}{n!}\left(\lambda_c\frac{1-e^{-\lambda_d t}}{\lambda_d}\xi_1\right)^{\otimes n}(x_1,...,x_n)\\
	&=\exp\left\{-\lambda_c\frac{1-e^{-\lambda_d t}}{\lambda_d}\right\}\frac{1}{n!}\left(\lambda_c\frac{1-e^{-\lambda_d t}}{\lambda_d}\right)^{n}1(x_1)\cdot\cdot\cdot 1(x_n).
\end{align*}
This shows that for any $n\geq 1$ the function $\rho_n$ is constant in $x_1,...,x_n$ with height given by the $n$-th component of the  solution to the birth-death chemical master equation with stochastic rate constants $\lambda_d$ and $\lambda_c$ (compare with \eqref{solution CME}). 

Figure \ref{fig:constant_creation} shows the solutions for $\rho_0$ and $\rho_1$ as a function of time. Figure \ref{uniform creation 0 particles figure} shows the the exponential decay of the probability of having $0$ particles due to the constant creation of particles, and \ref{constant creation multiline figure} shows the probability distribution of having $1$ particle is uniform in space for all times, as well as its convergence to the stationary distribution. 

\subsection{Dirac delta creation function at $x=0$}
In this case we take $\lambda_c(x)=\lambda_c\delta_0(x)$, $x\in [0,1]$ for some positive constant $\lambda_c$, so the creation takes place only in the leftmost point of the interval while degradation happens uniformly. This way one yields
\begin{align*}
c_j=\int_0^1\lambda_c(x)\xi_j(x)dx=\int_0^1\lambda_c\delta_0(x)\xi_j(x)dx=\lambda_c\xi_j(0)=\lambda_c,\quad\mbox{ for all $j\geq 1$}.
\end{align*}
and formulas \eqref{d1} and \eqref{d} now read 
\begin{align*}
	\rho_0(t)=\mathbb{P}(\mathcal{N}(t)=0)=\exp\left\{-\lambda_c\frac{1-e^{-\lambda_d t}}{\lambda_d}\right\},\quad t\geq 0,	
\end{align*}

\begin{figure}[htb]
	\centering
	\begin{subfigure}[c]{0.45\textwidth}
		\includegraphics[width=\linewidth, height=5cm]{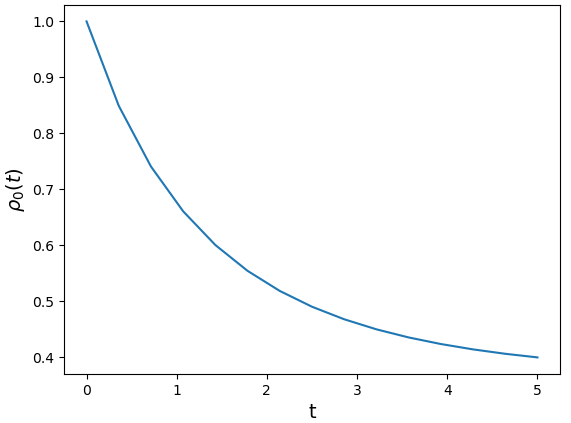}
		\caption{}
		\label{creation at 0 a)}
	\end{subfigure}
	\begin{subfigure}[c]{0.45\textwidth}
		\includegraphics[width=\linewidth, height=5cm]{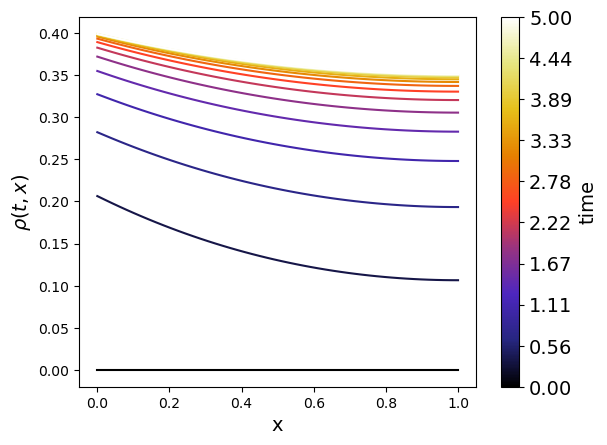}
		\caption{}
		\label{creation at 0 b)}
	\end{subfigure}
	\centering
	\begin{subfigure}[c]{0.55\textwidth}
		\includegraphics[width=\linewidth]{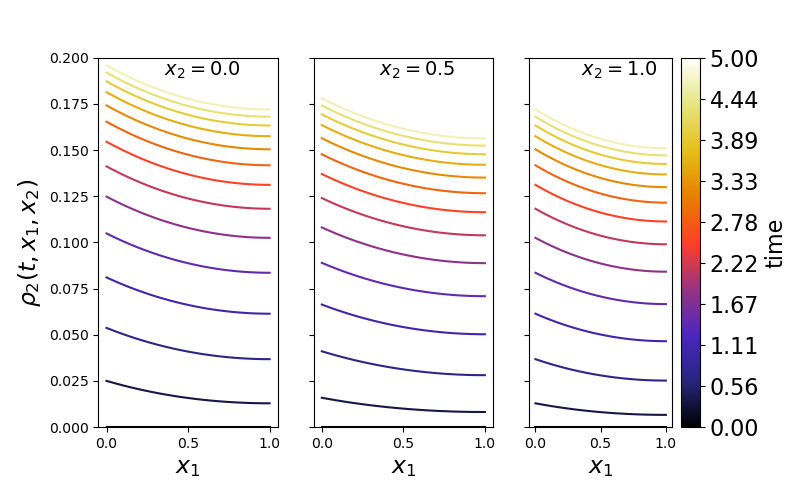}
		\caption{}
		\label{creation at 0 c)}
	\end{subfigure}
	\begin{subfigure}[c]{0.4\textwidth}
	    \includegraphics[width=\linewidth, height=5.5cm]{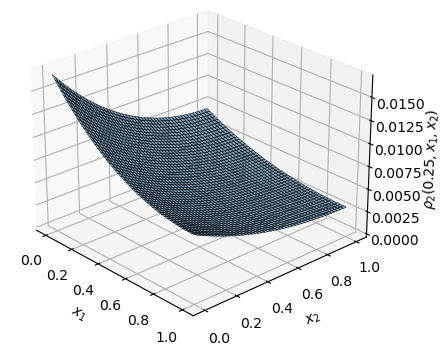}
	    \caption{}
	    \label{creation at 0 d)}
	\end{subfigure}
\caption{Solutions plots of the bdCDME generated with creation of particles at $x=0$ and constant degradation in the whole domain, namely with $\lambda_c(x)=0.5\delta_0(x)$ and $\lambda_d(x)=0.5$. The first 1000 terms of the sum in \cref{eq:genFunctSum} are considered. a. The solution of the $0$ particle density ($\rho_0(t)$) as a function of time. b. The solution of the  $1$ particle density ($\rho_1(t,x_1)$) for given position and time. c. The solution of the $2$ particle density ($\rho_2(t,x_1,x_2)$) with respect to $x_1$ and $t$ for three values of $x_2$. Time points as indicated in the color bar. d. The solution of the two particle density for fixed time, $\rho_2(t=0.25,x_1,x_2)$, as a function of $x_1$ and $x_2$.}
\label{creation at origin full figure}
\end{figure}
\clearpage
and for $n\geq 1$
\begin{align*}
	\rho_n(t,x_1,...,x_n)&=\exp\left\{-\lambda_c\frac{1-e^{-\lambda_d t}}{\lambda_d}\right\}\frac{1}{n!}\left(\sum_{j\geq 1}\lambda_c\frac{1-e^{-\alpha_j t}}{\alpha_j}\xi_{j}\right)^{\otimes n}(x_1,...,x_n)\\
	&=\exp\left\{-\lambda_c\frac{1-e^{-\lambda_d t}}{\lambda_d}\right\}\frac{\lambda_c^n}{n!}\left(\sum_{j\geq 1}\frac{1-e^{-\alpha_j t}}{\alpha_j}\xi_{j}\right)^{\otimes n}(x_1,...,x_n).
\end{align*}
We note that even though $\lambda_c(x)$ is a generalized function the series 
\begin{align}
\sum_{j\geq 1}c_{j}\frac{1-e^{-\alpha_j t}}{\alpha_j}\xi_{j}=\lambda_c\sum_{j\geq 1}\frac{1-e^{-\alpha_j t}}{\alpha_j}\xi_{j}
\label{eq:genFunctSum}
\end{align}
appearing above converges in $L^2([0,1])$.

In Figure \ref{creation at origin full figure} we plot solution of the bdCDME for this example. Figure \ref{creation at 0 a)} shows the exponential decay of the probability of having 0 particles due to the constant creation of particles. In contrast with Figure \ref{fig:constant_creation}, in Figures \ref{creation at 0 b)} and \ref{creation at 0 c)}, one can see the effect of the creation happening only at $x=0$ due to the peaks at the origin, while the highest peak is when $x_2=x_1=0$. With increasing time the peaks at the origin smooth out due to diffusion and probability being distributed through the different particle number densities. Similar to before, the curves converge to their stationary distribution as time increases. Lastly, Figure \ref{creation at 0 d)} shows the solution of the bdCDME for 2 particles as a surface on $x_1,x_2$ axes, when time is fixed at $t=0.25$. 

\subsection{Dirac delta creation function at $x=1/2$}

\begin{figure}[ht]
	\centering
	\begin{subfigure}[c]{0.45\textwidth}
		\includegraphics[width=\linewidth, height=5cm]{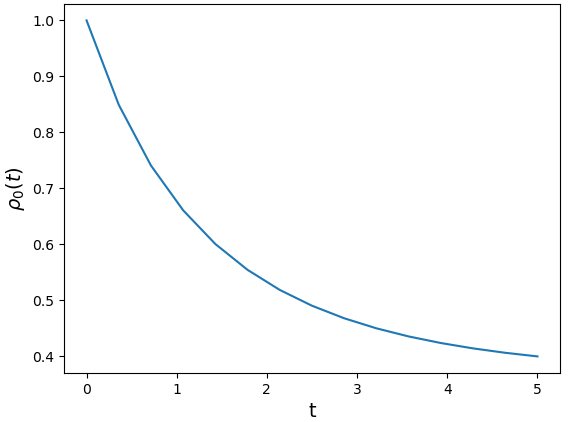}
		\caption{}
		\label{creation at 0.5 a)}
	\end{subfigure}
	\begin{subfigure}[c]{0.45\textwidth}
		\includegraphics[width=\linewidth, height=5cm]{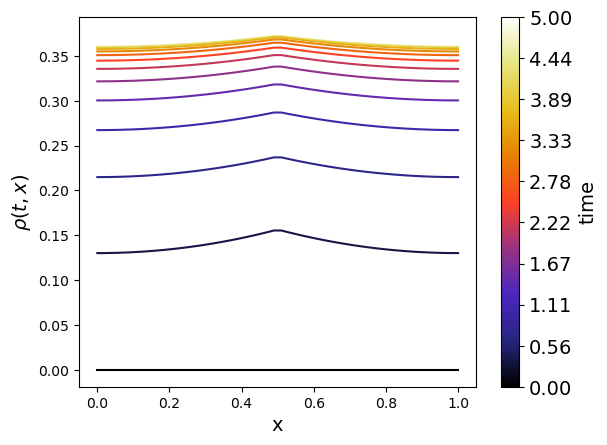}
		\caption{}
		\label{creation at 0.5 b)}
	\end{subfigure}
	\centering
	\begin{subfigure}[c]{0.55\textwidth}
		\includegraphics[width=\linewidth]{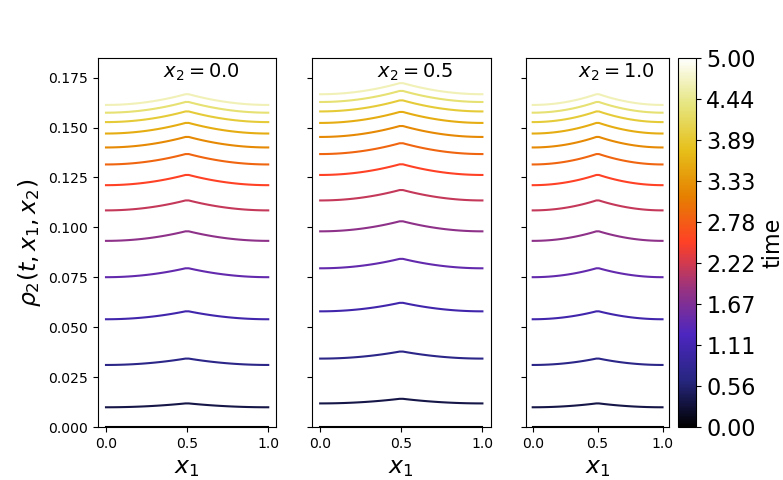}
		\caption{}
		\label{creation at 0.5 c)}
	\end{subfigure}
	\begin{subfigure}[c]{0.4\textwidth}
	    \includegraphics[width=\linewidth, height=5.5cm]{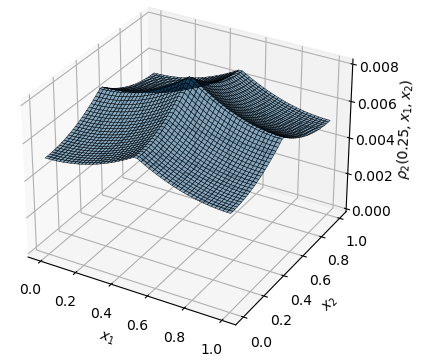}
	    \caption{}
	    \label{creation at 0.5 d)}
	\end{subfigure}
        \caption{Solutions plots of the bdCDME generated with creation of particles at $x=0.5$ and constant degradation in the whole domain, namely $\lambda_c=0.5\delta_1/2(x)$ and $\lambda_d=0.5$. The first 1000 terms of the sum in \cref{eq:genFunctSum} are considered. a. The solution of the $0$ particle density ($\rho_0(t)$) as a function of time. b. The solution of the $1$ particle density ($\rho_1(t,x_1)$) for given position and time. c. The solution of the $2$ particle density ($\rho_2(t,x_1,x_2)$) with respect to $x_1$ and $t$ for three values of $x_2$. Time points as indicated in the color bar. d. The solution of the two particle density for fixed time, $\rho_2(t=0.25,x_1,x_2)$, as a function of $x_1$ and $x_2$.}
	\label{creation at 0.5 full figure}
\end{figure}
We now choose $\lambda_c(x)=\lambda_c\delta_{1/2}(x)$, $x\in [0,1]$ for some positive constant $\lambda_c$, so the creation takes place only on the middle of the interval and degradation happens uniformly. This way one obtains

\begin{align*}
	c_j=\int_0^1\lambda_c(x)\xi_j(x)dx=\int_0^1\lambda_c\delta_{1/2}(x)\xi_j(x)dx=\lambda_c\xi_j(1/2)=\lambda_c\cos((j-1)\pi/2),\quad\mbox{ for all $j\geq 1$}.
\end{align*}
Therefore, equations \eqref{d1} and \eqref{d} take now the form
\begin{align*}
	\rho_0(t)=\mathbb{P}(\mathcal{N}(t)=0)=\exp\left\{-\lambda_c\frac{1-e^{-\lambda_d t}}{\lambda_d}\right\},\quad t\geq 0,	
\end{align*}

and for $n\geq 1$
\begin{align*}
	\rho_n(t,x_1,...,x_n)&=\exp\left\{-\lambda_c\frac{1-e^{-\lambda_d t}}{\lambda_d}\right\}\frac{1}{n!}\left(\sum_{j\geq 1}\lambda_c\cos((j-1)\pi/2)\frac{1-e^{-\alpha_j t}}{\alpha_j}\xi_{j}\right)^{\otimes n}(x_1,...,x_n)\\
	&=\exp\left\{-\lambda_c\frac{1-e^{-\lambda_d t}}{\lambda_d}\right\}\frac{\lambda_c^n}{n!}\left(\sum_{k\geq 1}(-1)^{k-1}\frac{1-e^{-\alpha_{2k-1} t}}{\alpha_{2k-1}}\xi_{2k-1}\right)^{\otimes n}(x_1,...,x_n).
\end{align*}

Figure \ref{creation at 0.5 full figure} shows plots of the solution of the bdCDME for this example. Figure \ref{creation at 0.5 a)} shows the exponential decay of the probability of having $0$ particles due to the constant creation of particles. However, in conrast with figures \ref{fig:constant_creation} and \ref{creation at origin full figure}, in this case, the effect of creation in the middle of the interval can be seen in the peaks in the Figures \ref{creation at 0.5 b)} and \ref{creation at 0.5 c)}, while the highest peak is at $x_1=x_2=0.5$, as expected. Similar to the previous example the effect of the location of the creation of particles on the distribution becomes less important with increasing time due to diffusion. Once again, the curves converge to their stationary distribution. Lastly, Figure \ref{creation at 0.5 d)} plots the solution of bdCDME as a surface for 2 particle case at fixed time $t=0.25$, as a function of $x_1$ and $x_2$.

\paragraph{Acknowledgments} M.J.R acknowledges the support from Deutsche Forschungsgemeinschaft (DFG) (Grant No. RA 3601/1-1) and from the Dutch Institute for Emergent Phenomena (DIEP) cluster at the University of Amsterdam.

\clearpage
\bibliographystyle{plain}
\bibliography{birth_death_CDME}

\end{document}